\numberwithin{equation}{section}
\theoremstyle{plain}
\newtheorem{thm}{Theorem}[section]
\newtheorem{prop}[thm]{Proposition}
\newtheorem{lem}[thm]{Lemma}
\theoremstyle{definition}
\newtheorem{defi}[thm]{Definition}
\newtheorem{exm}[thm]{Example}
\theoremstyle{remark}
\newcommand{\D}{\mathbf{D}}
\newcommand{\K}{\mathbf{K}}
\newcommand{\ac}{\mathrm{ac}}
\def\Z{\mathbb{Z}}
\def\p{\partial}
\def\Im{{\rm Im}}
\def\La{\Lambda}
\def\I{\mathcal{I}}
\def\Bb{\mathbf{B}}
\def\s{\sharp}
\def\c{\circ}
\def\d{\delta}
\def\AA{\mathcal{A}}
\def\aa{\alpha}
\def\b{\beta}
\def\z{\zeta}
\def\g{\gamma}
\def\bu{\bullet}
\def\xra{\xrightarrow[]{}}
\renewcommand{\-}{\mbox{-}}
\newcommand{\Mod}{\mathrm{Mod}}
\newcommand{\Inj}{\mathrm{Inj}}
\newcommand{\rad}{\mathrm{rad}}
\newcommand{\Proj}{\mathrm{Proj}}
\begin{document}

\title{The injective and projective Leavitt complexes}

\author{Huanhuan Li}

\subjclass[2010]{16G20, 16E45, 18E30, 18G35.}

\keywords{injective Leavitt complex, projective Leavitt complex, compact generator, Leavitt path algebra, dg quasi-balanced module}

\date{\today}

\begin{abstract} For a certain finite graph $E$, we consider the
corresponding finite dimensional algebra $A$ with radical square zero. An explicit compact generator for the homotopy category of acyclic complexes of injective (resp. projective) modules over $A$, called the injective (resp. projective) Leavitt complex of $E$, was constructed in \cite{li} (resp. \cite{li2}). We overview the connection between the injective (resp. projective) Leavitt complex and the Leavitt path algebra of $E$. A differential graded bimodule structure, which is right quasi-balanced, is endowed to the injective (resp. projective) Leavitt complex in \cite{li} (resp. \cite{li2}). 
We prove that the injective (resp. projective) Leavitt complex is not left quasi-balanced. 
\end{abstract}

\maketitle

\section{Introduction}

Let $A$ be a finite dimensional algebra over a field $k$. We denote by 
${\mathbf{K}_{\rm ac}}(A\-\Inj)$ the homotopy category of acyclic complexes of injective $A$-modules which is called the stable derived category of $A$ in \cite{kr}. This category is
a compactly generated triangulated category such that its subcategory of compact objects is triangle equivalent to the singularity category \cite{bu,o} of $A$.

In general, it seems very difficult to give an explicit compact generator for the stable derived category of an algebra. An explicit compact generator, called the \emph{injective Leavitt complex}, for the homotopy category $\K_{\rm ac}(A\-\Inj)$ in the case that the algebra $A$ is with radical square zero was constructed in \cite{li}. This terminology is justified by the following result: the differential graded endomorphism
algebra of the injective Leavitt complex is quasi-isomorphic to the Leavitt path algebra in the sense of \cite{ap, amp}. Here, the Leavitt path algebra is naturally $\Z$-graded and viewed as a differential graded algebra with trivial differential.

We denote by ${\K_{\ac}}(A\-\Proj)$ the homotopy category of acyclic complexes of projective $A$-modules. This category is a compactly generated triangulated category whose subcategory of compact objects is triangle equivalent to the opposite category of the singularity category  of the opposite algebra $A^{\rm op}$. An explicit compact generator, called the \emph{projective Leavitt complex}, for the homotopy category ${\K_{\ac}}(A\-\Proj)$ in the case that $A$ is an algebra with radical square zero was constructed in \cite{li2}. It is show that the opposite differential graded endomorphism algebra of the projective Leavitt complex of a finite quiver without sources is quasi-isomorphic to the Leavitt path algebra of the opposite graph \cite{li2}. 

We recall the constructions of the injective and projective Leavitt complexes. We overview the connection between the injective and projective Leavitt complex and the Leavitt path algebra of the given graph. A differential graded bimodule structure, which is right quasi-balanced, is endowed to the injective and projective Leavitt complex in \cite{li} and  \cite{li2}. We prove that the injective and projective Leavitt complex is not left quasi-balanced.

\section{Preliminaries}

In this section we recall some notations on compactly generated triangulated categories and differential graded algebras. 

\subsection{Triangulated categories} 
The construction of derived categories for an arbitrary abelian category goes back to the inspiration of Grothendieck in the early sixties in order to formulate Grothendieck's duality theory for schemes \cite{grothendieck}. The formulation in terms of triangulated categories was achieved by Verdier in the sixties and an account of this is published in Verdier  \cite{verdier}. 
 Happel \cite{happel1,happel2} investigated the derived category of bounded complexes of the category of modules over a finite dimensional algebra. Rickard  introduced the concept of tilting complex \cite{rickard1,rickard2,rickard3} in order to investigate the derived category. Now triangulated categories and derived categories have become an important tool in many branches of algebraic geometry, in algebraic analysis, non-commutative algebraic geometry, representation theory, and so on. 
 
For the definition of a triangulated category, refer to \cite[\S1.1]{happel2}, \cite[\S1.3]{neeman},etc. Homotopy categories and derived categories over an abelian category are classical examples of triangulated categories (refer to \cite[\S1.3]{happel2} etc).

We recall the triangulated structure for the homotopy category.  We first recall that a complex $X^{\bullet}=(X^{i}, d^i_{X})_{i\in \Z}$ over an abelian category $\AA$ is by definition a collection of objects $X^i$ in $\AA$ and morphisms $d^i_X:X^i\xra X^{i+1}$ in $\AA$ such that $d^{i+1}_X\circ d^i_X=0$ for all $i\in\Z$. Usually we  write a complex $X^{\bullet}=(X^{i}, d^i_{X})_{i\in \Z}$ as 
\begin{equation*}
\cdots \stackrel{}{\longrightarrow} X^{i-1}\stackrel{d^{i-1}_{X}}{\longrightarrow}X^{i}\stackrel{d_{X}^{i}}
{\longrightarrow}X^{i+1} \stackrel{}{\longrightarrow}\cdots .
\end{equation*}

Let $A$ be a finite dimensional algebra over a field $k$.  Denote by $A$-Mod the category of left $A$-modules and $\K(A\-\Mod)$ its homotopy category. For a complex $X^{\bullet}=(X^{i}, d^i_{X})_{i\in \Z}$ of $A$-modules, the complex $X^{\bullet}[1]$ is given by $(X^{\bullet}[1])^{i}=X^{i+1}$ and
$d^i_{X[1]}=-d^{i+1}_{X}$ for $i\in\Z$.
For a chain map $f^{\bullet}: X^{\bullet}\xrightarrow[]{} Y^{\bullet}$ , its \emph{mapping cone} ${\rm Con}(f^{\bullet})$ is
a complex such that ${\rm Con}(f^{\bullet})=X^{\bullet}[1]\oplus Y^{\bullet}$ with the differential
$d^i_{{\rm Con}(f^{\bullet})}=\begin{pmatrix}-d^{i+1}_{X}&0\\f^{i+1}&d^i_{Y}\end{pmatrix}.$
Each triangle in $\K(A\-\Mod)$ is isomorphic to $$\CD
  X^{\bullet} @>f^{\bullet}>> Y^{\bullet}@>{\begin{pmatrix}0\\1\end{pmatrix}} >> {\rm Con}(f^{\bullet}) @>{\begin{pmatrix}1&0\end{pmatrix}}>> X^{\bullet}[1]
\endCD$$ for some chain map $f^{\bullet}$.

\subsection{Compactly generated triangulated categories}
For a triangulated category $\mathcal{T}$, a \emph{thick} subcategory of $\mathcal{T}$
is a triangulated subcategory of $\mathcal{T}$ which is closed under direct summands. Let $\mathcal{S}$ be a class of objects in $\mathcal{T}$.
We denote by thick$\langle\mathcal{S}\rangle$ the smallest thick subcategory of $\mathcal{T}$
containing $\mathcal{S}$.
If $\mathcal{T}$ has arbitrary coproducts, we denote by
${\rm Loc}\langle\mathcal{S}\rangle$ the smallest triangulated subcategory of
$\mathcal{T}$ which contains $\mathcal{S}$ and is closed under arbitrary coproducts.
By \cite[Proposition 3.2]{bn} we have that thick$\langle\mathcal{S}\rangle\subseteq{\rm Loc}\langle \mathcal{S}\rangle$.

For a triangulated category $\mathcal{T}$ with arbitrary coproducts, an object $M$ in $\mathcal{T}$ is \emph{compact}
if the functor ${\rm Hom}_{\mathcal{T}}(M,-)$ commutes with arbitrary coproducts.
Denote by $\mathcal{T}^{c}$ the full subcategory consisting of compact objects; it is a thick subcategory.

A triangulated category $\mathcal{T}$ with arbitrary coproducts is \emph{compactly generated} \cite{ke, n1} if there exists a set $\mathcal{S}$ of compact objects such that
any nonzero object $T$ satisfies that ${\rm Hom}_{\mathcal{T}}(S,T[n])\neq 0$
for some $S\in\mathcal{S}$ and $n\in\Z$. Here, $[n]$ is the $n$-power of the shift function of $\mathcal{T}$. This is equivalent to the condition that
$\mathcal{T}={\rm Loc}\langle\mathcal{S}\rangle$, in which case we have $\mathcal{T}^{c}$=thick$\langle\mathcal{S}\rangle$; see \cite[Lemma 3.2]{n1}. If the above set $\mathcal{S}$
consists of a single object $S$, we call $S$ a \emph{compact generator} of $\mathcal{T}$.

Let $A\-\Inj$ be the category of injective $A$-modules. Denote by $\K(A\-\Inj)$ the homotopy category
of complexes of injective $A$-modules, which is a triangulated subcategory of $\K(A\-\Mod)$ that is closed under coproducts. By \cite[Proposition 2.3(1)]{kr} $\K(A\-\Inj)$ is a compactly generated triangulated category.

Denote by ${\mathbf{K}_{\rm ac}}(A\-\Inj)$ the full subcategory of $\mathbf{K}(A\-\Inj)$
formed by acyclic complexes of injective $A$-modules. 
 The homotopy category
${\mathbf{K}_{\rm ac}}(A\-\Inj)$ is called the stable derived category of $A$ in \cite{kr}. This category is
a compactly generated triangulated category such that its subcategory of compact objects is triangle equivalent to the singularity category \cite{bu,o}
of $A$. 

We denote by ${\K_{\ac}}(A\-\Proj)$ the homotopy category of acyclic complexes of projective $A$-modules. This category is a compactly generated triangulated category whose subcategory of compact objects is triangle equivalent to the opposite category of the singularity category of the opposite algebra $A^{\rm op}$.

In the last decade, Leavitt path algebras of directed graphs~\cite{ap,amp} were introduced as an
algebraisation of graph $C^*$-algebras~\cite{kprr, raeburn} and in particular
Cuntz--Krieger algebras \cite{cuntzkrieger}.

For a finite directed graph $E$, S. Paul Smith \cite{smith} describes the quotient category $${\rm QGr}(kE):={\rm Gr}(kE)/{\rm Fdim}(kE)$$ of graded $kE$-modules modulo those that are the sum of their finite dimensional submodules in terms of the category of graded modules over the Leavitt path algebra of $E^o$ over a field $k$. Here, $kE$ is the path algebra of $E$ and $E^o$ is the graph without sources or sinks that is obtained by repeatedly removing all sinks and sources from $E$. The full subcategory ${\rm qgr(kE)}$ of finitely presented objects in ${\rm QGr(kE)}$ is triangulated equivalent to the singularity category of the radical square zero algebra $kE/kE_{\geq 2}$; see \cite[Theorem 7.2]{smith}.

The homotopy categories ${\mathbf{K}_{\rm ac}}(A\-\Inj)$ and ${\K_{\ac}}(A\-\Proj)$ were described as derived categories of Leavitt path algebras, in the case that $A$ is an algebra with radical square zero associated to a certain finite directed graph; see \cite[Theorem 6.1]{cy} and \cite[Theorem 6.2]{cy}  . 

In general, it seems very difficult to give an explicit compact generator for the stable derived category of an algebra or the homotopy category of acyclic complexes of projective modules over an algebra. An explicit compact generator for
the homotopy categories $\K_{\rm ac}(A\-\Inj)$ and ${\K_{\ac}}(A\-\Proj)$, were constructed in \cite{li} and \cite{li2} respectively, in the case that the algebra $A=kE/kE_{\geq 2}$ is with radical square zero, where $E$ is a finite directed graph without sources or sinks. 

\subsection{Differential graded algebras} Differential graded (dg for short) algebras appeared in \cite{kelly}.  They found applications in the representation theory of finite-dimensional algebras in the seventies; see \cite{drozd,roiter}. The idea to use dg categories to `enhance' triangulated categories goes back at least to Bondal-Kapranov \cite{bk}, who were motivated by the study of exceptional collections of coherent sheaves on projective varieties.

We recall from \cite{ke} some notation on differential graded modules.
Let $A=\bigoplus_{n\in\Z}A^{n}$ be a $\Z$-graded algebra. For a (left) graded $A$-module $M=\bigoplus_{n\in\Z}M^n$,
elements $m$ in $M^{n}$ are said to be homogeneous of degree $n$, denoted by $|m|=n$.

A \emph{differential graded algebra} (dg algebra for short) is a $\Z$-graded algebra $A$ with a differential
$d:A \xra A$ of degree one such that $d(ab)=d(a)b+(-1)^{|a|}ad(b)$ for homogenous elements
$a,b\in A$.

A \emph{(left) differential graded} $A$-module (dg $A$-module for short) $M$
is a graded $A$-module $M=\bigoplus_{n\in \mathbb{Z}}M^{n}$ with a differential $d_{M}:M\xra M$
of degree one such that $d_{M}(a{\cdot} m)=d(a){\cdot} m+(-1)^{|a|}a{\cdot} d_{M}(m)$ for homogenous
elements $a\in A$ and $m\in M$. A morphism of dg $A$-modules is a morphism of $A$-modules
preserving degrees and commuting with differentials.
A \emph{right differential graded} $A$-module (right dg $A$-module for short) $N$
is a right graded $A$-module $N=\bigoplus_{n\in \Z}N^{n}$ with a differential $d_{N}:N\xra N$
of degree one such that $d_{N}(m\cdot a)=d_{N}(m)\cdot a+(-1)^{|m|}m\cdot d(a)$ for homogenous
elements $a\in A$ and $m\in N$. Here, we use central dots to denote the $A$-module action.

Let $B$ be another dg algebra.
Recall that a \emph{dg $A$-$B$-bimodule} $M$ is a left dg $A$-module as well as a right dg
$B$-module such that $(a\cdot m)\cdot b=a\cdot (m\cdot b)$ for $a\in A$, $m\in M$ and $b\in B$.

Let $M, N$ be (left) dg $A$-modules.
We have a $\Z$-graded vector space $${\rm Hom}_{A}(M, N)=\bigoplus_{n\in\Z}{\rm Hom}_{A}(M, N)^{n}$$
such that each component ${\rm Hom}_{A}(M, N)^{n}$ consists of $k$-linear maps
$f:M\xra N$ satisfying $f(M^i)\subseteq N^{i+n}$ for all $i\in\Z$ and $f(a\cdot m)=(-1)^{n|a|}a
\cdot f(m)$ for all homogenous elements $a\in A$.
The differential on ${\rm Hom}_{A}(M, N)$ sends $f\in
{\rm Hom}_{A}(M, N)^{n}$ to $d_{N}\circ f-(-1)^n f\circ d_{M}\in
{\rm Hom}_{A}(M, N)^{n+1}$.
Furthermore, ${\rm End}_{A}(M):={\rm Hom}_{A}(M, M)$ becomes a dg algebra
with this differential and the usual composition as multiplication.
The dg algebra ${\rm End}_{A}(M)$ is usually called the \emph{dg endomorphism algebra} of $M$.

We denote by $A^{\rm opp}$ the \emph{opposite dg algebra} of a dg algebra $A$. More precisely,
$A^{\rm opp}=A$ as graded spaces with the same differential, and the multiplication $``\circ"$
on $A^{\rm opp}$ is given by $a\circ b=(-1)^{|a||b|}ba$.

Let $B$ be another dg algebra.
Recall that a right dg $B$-module is a left dg $B^{\rm opp}$-module via $bm= (-1)^{|b||m|}m\cdot b$ for homogenous elements $b\in B, m\in M$. For a dg $A$-$B$-bimodule $M$, the canonical map
$A\xra {\rm End}_{B^{\rm opp}}(M)$ is a homomorphism of dg algebras, sending $a$ to $l_{a}$ with $ l_{a}(m)=a\cdot m$ for $a\in A$ and $m\in M$.
Similarly, the canonical map
$B\xra {\rm End}_{A}(M)^{\rm opp}$ is a homomorphism of dg algebras,
sending $b$ to $ r_{b}$ with $ r_{b}(m)=(-1)^{|b||m|}m\cdot b$ for homogenous elements $b\in B$ and $m\in M$.

A dg $A$-$B$-bimodule $M$ is called \emph{right quasi-balanced} provided that the canonical
homomorphism $B \xra {\rm End}_{A}(M)^{\rm opp}$ of dg algebras
is a quasi-isomorphism; see \cite[2.2]{cy}. Dually a dg $A$-$B$-bimodule $M$ is called \emph{left quasi-balanced} provided that the canonical homomorphism $A \xra {\rm End}_{B^{\rm opp}}(M)$ of dg algebras is a quasi-isomorphism.

Denote by $\K(A)$ the homotopy category and by $\D(A)$ the derived category
of left dg $A$-modules; they are triangulated categories with arbitrary coproducts.
For a dg $A$-$B$-bimodule
$M$ and a left dg $A$-module $N$, ${\rm Hom}_{A}(M, N)$ has a natural structure of left dg $B$-module.

Recall that ${\rm Loc}\langle M\rangle\subseteq\K(A)$ is the smallest triangulated
subcategory of
$\K(A)$ which contains $M$ and is closed under arbitrary coproducts. If
$M$ is a compact object in ${\rm Loc}\langle M\rangle$ and a dg $A$-$B$-bimodule which is right quasi-balanced, then we have a triangle equivalence \begin{equation*}{\rm Hom}_{A}(M,-):{\rm Loc}\langle M\rangle \stackrel{\sim}\longrightarrow \D(B),\end{equation*} see \cite[Proposition 2.2]{cy}; compare \cite[4.3]{ke} and \cite[Appendix A]{kr}.

\section{The injective Leavitt complex of a finite graph without sinks}

In this section, we recall the construction of the injective Leavitt complex of a finite graph without sinks and prove that the injective Leavitt complex is not left quasi-balanced.

\subsection{The injective Leavitt complex}
Let $E=(E^{0}, E^{1}; s, t)$ be a finite (directed) graph. A path in the graph $E$
is a sequence $p=\alpha_{n}\cdots\alpha_{2}\alpha_{1}$ of edges with
$t(\alpha_{j})=s(\alpha_{j+1})$ for $1\leq j\leq n-1$.
The length of $p$, denoted by $l(p)$, is $n$. The starting vertex of $p$, denoted by $s(p)$,
is $s(\alpha_{1})$.
The terminating vertex of $p$, denoted by $t(p)$, is $t(\alpha_{n})$.
We identify an
edge with a path of length one. We associate to each
vertex $i\in E^{0}$ a trivial path $e_{i}$ of length zero. Set $s(e_i)=i=t(e_i)$.
Denote by $E^{n}$ the set of all paths in $E$ of length $n$ for each $n\geq 0$. Recall that a vertex of $E$ is a sink if there is no
edge starting at it.

Let $E$ be a finite graph without sinks. For any vertex $i\in E^0$, fix an edge $\gamma$ with $s(\gamma)=i$. We call the fixed edge the \emph{special edge} starting at $i$.
For a special edge $\alpha$,
we set \begin{equation} \label{ii}
S(\alpha)=\{\beta\in E^{1}\;| \; s(\beta)=s(\alpha), \beta\neq\alpha\}.
\end{equation}
We mention that the terminology ``special edge" is taken from \cite{aajz}.

The following notion is inspired by a basis for Leavitt path algebra (refer to \cite{aajz} and \cite{hp} for the basis).

\begin{defi}\label{da}
For two paths $p=\alpha_{m}\cdots\alpha_{1}$ and $q=\beta_{n}\cdots\beta_{1}$ in $E$
with $m,n\geq 1$,
we call the pair $(p, q)$ an \emph{admissible pair} in $E$ if $t(p)=t(q)$, and either
$\alpha_{m}\neq \beta_{n}$, or $\alpha_{m}=\beta_{n}$ is not special.
For each path $r$ in $E$, we define two additional \emph{admissible pairs} $(r, e_{t(r)})$ and $(e_{t(r)}, r)$ in $E$. \hfill $\square$
\end{defi}

For each vertex $i\in E^0$ and $l\in \Z$, set
\begin{equation}
\mathbf{B}^{l}_{i}
=\{(p, q)\;|\;(p, q) \text{~is an admissible pair~} \text{with~} l(q)-l(p)=l\text{~and~}s(q)=i\}.\label{p}
\end{equation} The above set $\mathbf{B}^{l}_{i}$
is not empty for each vertex $i$ and each integer $l$; see \cite[Lemma 1.2]{li}.

Let $E$ be a finite quiver without sinks. Set
$A=kE/kE^{\geq 2}$ to be the corresponding finite dimensional algebra
with radical square zero. Indeed, $A=kE^{0}\oplus kE^{1}$
as a $k$-vector space and its Jacobson radical $\rad A=kE^{1}$ satisfying $(\rad A)^{2}=0$.

Denote by $I_{i}=D(e_{i}A)$ the injective left $A$-module for each $i\in E^{0}$, where $(e_{i}A)_{A}$ is the
indecomposable projective right $A$-module and $D={\rm Hom}_{k}(-, k)$ denotes
the standard $k$-duality.
Denote by $\{e_{i}^{\s}\}\cup\{ \alpha^{\s}| \alpha\in E^{1}, t(\alpha)=i\}$ the basis of
$I_i$, which is dual to the basis $\{e_{i}\}\cup\{
\alpha|\alpha\in E^{1}, t(\alpha)=i\}$ of $e_iA$. 

For a set $X$ and an $A$-module $M$, the coproduct $M^{(X)}$ will be
understood as
$\bigoplus_{x\in X}M\zeta_{x}$, where each component $M\zeta_{x}$ is $M$.
For an element $m\in M$, we use $m\zeta_{x}$
to denote the corresponding element in $M\zeta_{x}$.

For a path $p=\alpha_{n}\cdots\aa_2\alpha_{1}$ in $E$ of length
$n\geq 2$, we denote by $\widehat{p}=\alpha_{n-1}\cdots\alpha_{1}$ and $\widetilde{p}=\alpha_{n}\cdots\alpha_{2}$ the two \emph{truncations} of
$p$. For an edge $\aa$, denote by $\widehat{\aa}=e_{s(\aa)}$ and $\widetilde{\aa}=e_{t(\aa)}$.

\begin{defi} \label{definj} Let $E$ be a finite graph without sinks. The \emph{injective Leavitt complex} $\I^{\bullet}=(\I^{l}, \partial^{l})_{l\in \Z}$ of $E$ is defined as follows:
\begin{enumerate}\item[(1)] the $l$-th component $\I^{l}=\bigoplus_{i\in E^{0}}{I_{i}}^{(\Bb^l_i)}$;

\item[(2)] the differential $\partial^{l}:\I^{l}\xrightarrow[]{} \I^{l+1}$ is given by
$\partial^{l}(e_{i}^{\s}\zeta_{(p, q)})=0$ and
\begin{equation*}\partial^{l}(\alpha^{\s}\zeta_{(p, q)})=\begin{cases}
 e_{s(\alpha)}^{\s}\zeta_{(\widehat{p}, e_{s(\alpha)})}-\sum\limits_{\beta\in S(\alpha)} e_{s(\alpha)}^{\s}\zeta_{(\beta\widehat{p}, \beta)}, &
 \begin{matrix}\text{if~} q=e_{i}, ~~p=\alpha \widehat{p} \\\text{~and~} \alpha \text{~is special};\end{matrix}\\
 e_{s(\alpha)}^{\s}\zeta_{(p, q\alpha)},& \text{otherwise},
\end{cases}
\end{equation*}
for any $i\in E^{0}$, $(p, q)\in \Bb^l_i$ and $\alpha\in E^{1}$ with $t(\alpha)=i$. Here, the set $S(\alpha)$ is defined in $(\ref{ii})$. \hfill $\square$
\end{enumerate}
\end{defi} Each component $\I^l$ is an injective $A$-module. The differentials $\p^l$ are $A$-module morphisms. Indeed, $\I^{\bu}$ is an acyclic complex of injective $A$-modules; see \cite[Proposition 1.9]{li}.

\begin{exm} \label{exm} Let $E$ be the following graph with one vertex and $n$ loops with $n\geq 2$.  $$\xymatrix{
\!\!\!&  \bullet \ar@{.}@(l,d) \ar@(ur,dr)^{\alpha_{1}} \ar@(r,d)^{\alpha_{2}} \ar@(dr,dl)^{\alpha_{3}} 
\ar@(l,u)^{\alpha_{n}}& 
}$$ We choose $\alpha_{1}$ to be the special edge.
Let $e$ be the trivial path corresponding the unique vertex. Set $\mathbf{B}^{l}$ to be the set of admissible pairs $(p, q)$ in $E$ with $l(q)-l(p)=l$ for each $l\in \mathbb{Z}$.
A pair $(p,q)$ of paths lies in $\Bb^l$ if and only if
$l(q)-l(p)=l$ and $p, q$ do not end with $\aa_1$ simultaneously.
In particular, the set $\Bb^l$ is infinite.

The corresponding algebra $A$ with radical square zero
has a $k$-basis $\{e, \aa_1,\cdots, \aa_n\}$.
Set $I=D(A_{A})$.
Then the injective Leavitt complex $\I^{\bullet}$ of $E$ is as follows.
$$
\cdots
\stackrel{}{\longrightarrow} I^{({\mathbf{B}^{-1}})}
\stackrel{\p^{-1}}{\longrightarrow} I^{({\mathbf{B}^{0}})}
\stackrel{\p^{0}}{\longrightarrow} I^{({\mathbf{B}^{1}})}
\stackrel{}{\longrightarrow} \cdots
$$ We write the differential $\p^{-1}$ explicitly: $\p^{-1}(e^{\s}\zeta_{(p, q)})=0$, $\p^{-1}(\alpha_{i}^{\s} \zeta_{(p, q)})=e^{\s}\zeta_{(p, q\alpha_{i})}$ for $2\leq i\leq n$ and

\begin{equation*}\p^{-1}(\alpha_{1}^{\s}\zeta_{(p, q)})=\begin{cases}
 e^{\s}\zeta_{(e, e)}-\sum_{i=2}^ne^{\s}\zeta_{(\alpha_{i}, \alpha_{i})},  &\text{if $q=e$ and $p=\alpha_{1}$};\\
 e^{\s}\zeta_{(p, q\alpha_{1})},& \text{otherwise},
\end{cases}
\end{equation*} for $(p, q)\in \mathbf{B}^{-1}$.
\end{exm}

 Recall that the Leavitt path algebra $L_{k}(E)$ of a finite graph $E$ is the $k$-algebra generated by the set $\{v\;|\;v\in E^{0}\}\cup \{e\;|\;e\in E^{1}\}
\cup\{e^{*}\;|\;e\in E^{1}\}$ subject to the following relations:
\begin{itemize}
\item [(0)] $vw=\delta_{v, w}v$ for every $v, w\in E^{0}$;

\item[(1)] $t(e)e=es(e)=e$ for all $e\in E^{1}$;

\item[(2)] $e^{*}t(e)=s(e)e^{*}=e^{*}$ for all $e\in E^{1}$;

\item[(3)] $ef^{*}=\delta_{e, f}t(e)$ for all $e,f\in E^{1}$;

\item[(4)] $\sum_{\{e\in E^{1}\;|\;s(e)=v\}}e^{*}e=v$ for every $v\in E^{0}$ which is not a sink.
 \end{itemize} 
 
 Here, $\delta$ is the Kronecker symbol. The relations $(3)$ and $(4)$ are called
\emph{Cuntz-Krieger relations}. The relation $(3)$ is called (CK1)-relation and $(4)$ is called (CK2)-relation. The elements $\alpha^{*}$ for $\alpha\in E^{1}$ are called \emph{ghost arrows}.

 If $p=e_{n}\cdots e_2e_{1}$ is a path in $E$ of length $n\geq 1$, we define $p^{*}=e_{1}^*e_2^*\cdots e_{n}^*$.
For convention, we set $v^*=v$ for $v\in E^{0}$.
We observe by $(2)$ that for paths $p, q$ in $E$, $p^*q=0$ for $t(p)\neq t(q)$. 
 
 Recall that the Leavitt path algebra is naturally $\Z$-graded by the length of paths. In what follows, we write $B=L_k(E)^{\rm op}$, which is the opposite algebra of $L_k(E)$. Then $B$ is a $\Z$-graded algebra. We view $B$ as a dg algebra with trivial differential.

Consider $A=kE/kE^{\geq 2}$ as a dg algebra concentrated on degree zero. Recall the injective Leavitt complex $\I^{\bu}=\bigoplus_{l\in\Z}\I^l$, which is a left dg $A$-module.
By \cite[Proposition 3.6]{li}, $\I^{\bu}$ is a right dg $B$-module and a dg $A$-$B$-bimodule. 

\vskip 5pt

The following theorem demonstrates the role of the injective Leavitt complex in the stable category and establishes a connection between the injective Leavitt complex and the Leavitt path algebra, which justifies the terminology.

\noindent $\mathbf{Theorem~I}$ \emph{Let $E$ be a finite graph without sinks, and let $A=kE/kE^{\geq 2}$ be the corresponding algebra with radical square zero.}

\begin{enumerate}
\item[(1)] \emph{The injective Leavitt complex $\mathcal{I}^{\bullet}$ of $E$ is a
compact generator for the homotopy category $\K_{\rm ac}(A\-\Inj)$.}

\item[(2)] \emph{The dg $A$-$B$-bimodule
$\mathcal{I}^{\bullet}$ is right quasi-balanced. In particular, the dg endomorphism algebra ${\rm End}_{A}(\I^{\bu})$ is quasi-isomorphic to the Leavitt path algebra $L_k(E)$. Here, $L_k(E)$ is naturally $\Z$-graded and viewed as a dg algebra with trivial differential.}
\end{enumerate}  For the proof of Theorem I, refer to \cite[Theorem 2.13]{li} and \cite[Theorem 4.2]{li}.

\vskip 5pt

There is a unique right $B$-module morphism $\psi: B\xra \I^{\bullet}$ with $$\psi(1)=\sum_{i\in E^0}e_i^{\s}\z_{(e_i, e_i)}.$$ Here, $1$ is the unit of $B$.
For each edge $\beta\in E^{1}$, there is a unique right $B$-module morphism $\psi_{\b}: B\xra \I^{\bullet}$ with $\psi_{\b}(1)=\b^{\s}\z_{(e_{t(\b)}, e_{t(\b)})}$. Let $(p, q)$ be an admissible pair in $E$. We have $\sum_{i\in E^0}e_i^{\s}\z_{(e_i, e_i)}\cdot p^*q=e_{s(q)}^{\s}\z_{(p, q)}$ and $\b^{\s}\z_{(e_i, e_i)}\cdot p^*q=\delta_{i, s(q)}\b^{\s}\z_{(p, q)}$ for each edge $\b\in E^1$ with $t(\b)=i$. It follows that \begin{equation}\label{psipsi}\psi(p^*q)=e_{s(q)}^{\s}\z_{(p, q)}
\text{\qquad and\qquad} \psi_{\b}(p^*q)=\delta_{s(q), t(\b)}\b^{\s}\z_{(p, q)}.\end{equation} It follows that $\psi$ is injective and $\Im \psi_{\b}\cong e_{t(\b)}B$ for each $\b\in E^1$. Consider the gradings of $B$ and $\I^{
\bu}$. We have that $\psi$ and $\psi_{\b}$ for $\b\in E^1$ are right graded $B$-module morphisms.

Set $\Psi=\begin{pmatrix}
\psi&(\psi_{\b})_{{\b\in E^1}}
\end{pmatrix}: B\oplus B^{(E^1)}\xra \I^{\bullet}$. The map $\Psi$ is a graded right $B$-module morphism.

\begin{lem} \label{property}
The above morphism $\Psi$ is surjective and the injective Leavitt complex $\I^{\bullet}$ of $E$ is a graded projective right $B$-module.
\end{lem}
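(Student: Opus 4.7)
The plan is to read both statements directly off the formulas in (\ref{psipsi}), combined with the observation that each injective module $I_i$ decomposes as a $k$-space into the line spanned by $e_i^\sharp$ and the lines spanned by $\alpha^\sharp$ for edges $\alpha$ with $t(\alpha)=i$. This will let me split $\mathcal{I}^\bullet$ as a graded right $B$-module along the images of $\psi$ and the various $\psi_\beta$, and then recognize each summand as a direct summand of a free graded right $B$-module.

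First, for surjectivity of $\Psi$, I would enumerate the natural $k$-basis of $\mathcal{I}^l=\bigoplus_{i\in E^0}I_i^{(\mathbf{B}^l_i)}$, namely $\{e_i^\sharp\zeta_{(p,q)}\}$ together with $\{\alpha^\sharp\zeta_{(p,q)}\mid \alpha\in E^1,\, t(\alpha)=i\}$ indexed by $(p,q)\in \mathbf{B}^l_i$. Since $(p,q)\in \mathbf{B}^l_i$ forces $s(q)=i$, formula (\ref{psipsi}) gives $\psi(p^*q)=e_i^\sharp\zeta_{(p,q)}$, and, using $t(\alpha)=i=s(q)$, also $\psi_\alpha(p^*q)=\alpha^\sharp\zeta_{(p,q)}$. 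Hence every basis vector of $\mathcal{I}^\bullet$ lies in the image of $\Psi$, which proves surjectivity.

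Next, for graded projectivity, I would establish the decomposition
\[
\mathcal{I}^\bullet=\mathrm{Im}(\psi)\oplus\bigoplus_{\beta\in E^1}\mathrm{Im}(\psi_\beta)
\]
as graded right $B$-modules. Each summand is a graded right $B$-submodule because $\psi$ and the $\psi_\beta$ are graded right $B$-module morphisms. The sum is direct because $\mathrm{Im}(\psi)$ is spanned by the $e_i^\sharp$-type basis vectors while each $\mathrm{Im}(\psi_\beta)$ is spanned by $\beta^\sharp$-type basis vectors, and these lie in linearly independent $k$-subspaces of the various $I_i$; combined with the surjectivity just proved, this forces the decomposition. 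Finally, $\mathrm{Im}(\psi)\cong B$ by injectivity of $\psi$, and $\mathrm{Im}(\psi_\beta)\cong e_{t(\beta)}B$ is a direct summand of the free graded right $B$-module $B$ (since $e_{t(\beta)}$ is a degree-zero idempotent), both already noted in the excerpt. Hence $\mathcal{I}^\bullet\cong B\oplus\bigoplus_{\beta\in E^1}e_{t(\beta)}B$ is a graded projective right $B$-module. No serious obstacle arises here: the only real content is the vertex/edge decomposition of each $I_i$, which mechanically induces the required decomposition of $\mathcal{I}^\bullet$.
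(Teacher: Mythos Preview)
Your proof is correct and follows essentially the same approach as the paper's own proof: both use (\ref{psipsi}) to hit every basis vector and establish surjectivity, then decompose $\mathcal{I}^\bullet$ as $\mathrm{Im}\,\psi\oplus\bigoplus_{\beta\in E^1}\mathrm{Im}\,\psi_\beta\cong B\oplus\bigoplus_{\beta\in E^1}e_{t(\beta)}B$. Your argument is slightly more detailed in justifying why the sum is direct (via the $e_i^\sharp$/$\alpha^\sharp$ splitting of each $I_i$), which the paper leaves implicit.
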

\begin{proof}
For each $i\in E^0$, $l\in \Z$ and $(p, q)\in \Bb^l_i$, we have that
$e_i^{\s}\z_{(p, q)}=\psi(p^*q)$ and $\aa^{\s}\z_{(p, q)}=\psi_{\aa}(p^*q)$
for each edge $\aa\in E^1$ with $t(\aa)=i$. Then $\Psi$ is surjective and $\I^{\bullet}=\Im \Psi$.
Observe that $\Im \Psi=\Im\psi+\sum_{\b\in E^1}\Im \psi_{\b}=
\Im\psi\oplus(\oplus_{\b\in E^1}\Im \psi_{\b})$. Then $\Im \Psi\cong B\oplus(\oplus_{\b\in E^1}e_{t(\b)}B)$ and we are done.
\end{proof}

The following observation implies that the dg $A$-$B$-bimodule $\I^{\bu}$ is not left quasi-balanced. In other words, the canonical dg algebra homomorphism $A \xra {\rm End}_{B^{\rm opp}}(\mathcal{I}^{\bullet})$ is not a quasi-isomorphism. Indeed, we infer from the observation that the dg endomorphism algebra ${\rm End}_{B^{\rm opp}}(\mathcal{I}^{\bullet})$ is acyclic.

\begin{prop}\label{leftquasi} We have that $\I^{\bu}=0$ in the homotopy category of right dg $B$-modules.
\end{prop}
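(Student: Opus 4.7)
The plan is to construct an explicit contracting homotopy for $\I^{\bullet}$ in the category of right dg $B$-modules, i.e., a graded right $B$-linear endomorphism $s$ of $\I^{\bullet}$ of degree $-1$ satisfying $\p s+s\p=\mathrm{id}_{\I^{\bullet}}$. Once $s$ is exhibited, the identity of $\I^{\bullet}$ is null-homotopic and hence $\I^{\bullet}=0$ in $\K(B^{\mathrm{opp}})$.

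The starting point is the graded right $B$-module decomposition $\I^{\bullet}=\Im\psi\oplus\bigoplus_{\b\in E^1}\Im\psi_{\b}$ obtained in the proof of Lemma~\ref{property}. Direct inspection of Definition~\ref{definj} shows that $\p$ annihilates $\Im\psi$ and sends each $\Im\psi_{\b}$ into $\Im\psi$, so the entire differential is concentrated in the restriction $\bigoplus_{\b}\Im\psi_{\b}\to\Im\psi$. If I can show this restriction is an isomorphism of graded right $B$-modules of degree $+1$, then declaring $s|_{\Im\psi_{\b}}=0$ and letting $s|_{\Im\psi}$ be its inverse yields a right $B$-linear homotopy satisfying $\p s+s\p=\mathrm{id}$ on each summand.

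The main obstacle is identifying this restriction concretely. Via the right $B$-module isomorphisms $\psi\colon B\to\Im\psi$ and $\psi_{\b}\colon e_{t(\b)}B\to\Im\psi_{\b}$ from (\ref{psipsi}), I expect the restriction of $\p$ to correspond to
$$\Phi\colon \bigoplus_{\b\in E^1} e_{t(\b)}B \lra B,\qquad (x_{\b})_{\b}\lmt \sum_{\b\in E^1} x_{\b}\b,$$
with products taken in $L_k(E)$. In the \emph{otherwise} branch of Definition~\ref{definj} this identification is immediate from (\ref{psipsi}), since $\p(\b^{\s}\z_{(p,q)})=e_{s(\b)}^{\s}\z_{(p,q\b)}=\psi(p^{*}q\b)$. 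The delicate case is the \emph{special} branch, in which $q=e_{t(\b)}$, $p=\b\widehat{p}$, $\b$ is special, and the naive pair $(\b\widehat{p},\b)$ fails to be admissible; here the Cuntz--Krieger relation CK2 at $s(\b)$, namely $s(\b)=\b^{*}\b+\sum_{\gamma\in S(\b)}\gamma^{*}\gamma$, together with $\widehat{p}^{*}s(\b)=\widehat{p}^{*}$ (valid because $t(\widehat{p})=s(\b)$), lets one rewrite $\psi(\widehat{p}^{*}\b^{*}\b)=\psi(\widehat{p}^{*})-\sum_{\gamma\in S(\b)}\psi((\gamma\widehat{p})^{*}\gamma)$, matching the special-case expression in Definition~\ref{definj}. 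Hence $\p(\psi_{\b}(x))=\psi(x\b)$ uniformly, confirming the identification with $\Phi$.

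It remains to verify that $\Phi$ is a right $B$-module isomorphism. Surjectivity uses the sinklessness of $E$: CK2 gives $1=\sum_{\b\in E^1}\b^{*}\b$ in $L_k(E)$, so every $y\in B$ may be written $y=\sum_{\b}(y\b^{*})\b$ with $y\b^{*}\in e_{t(\b)}B$ since $\b^{*}e_{t(\b)}=\b^{*}$. Injectivity is immediate from CK1: right-multiplying $\sum_{\b}x_{\b}\b=0$ by $\gamma^{*}$ and applying $\b\gamma^{*}=\delta_{\b,\gamma}t(\b)$ forces each $x_{\gamma}=0$. With $\Phi$ identified and inverted, the homotopy $s$ described above exists and contracts $\I^{\bullet}$ in the homotopy category of right dg $B$-modules, as claimed.
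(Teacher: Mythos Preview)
Your proof is correct and, once unwound, produces exactly the same contracting homotopy as the paper: setting $s|_{\Im\psi_{\b}}=0$ and $s|_{\Im\psi}=\bigl(\p|_{\bigoplus_{\b}\Im\psi_{\b}}\bigr)^{-1}$ gives, via your formula $\Phi^{-1}(y)=(y\b^{*})_{\b}$, precisely $s\bigl(e_i^{\s}\z_{(p,q)}\bigr)=\sum_{\{\b\in E^1\mid s(\b)=i\}}\psi_{\b}(p^{*}q\b^{*})$ and $s\bigl(\aa^{\s}\z_{(p,q)}\bigr)=0$, which is the map $h$ the paper writes down directly. The only difference is one of organization: the paper defines $h$ on basis vectors and then checks $\p h+h\p=\mathrm{id}$ componentwise (invoking \cite[Lemma~3.7]{li} for the identity $\p\circ\psi_{\aa}=\psi(-\cdot\aa)$ that you derive from Definition~\ref{definj} and CK2), whereas you first isolate the differential as the single right $B$-linear map $\Phi\colon(x_{\b})_{\b}\mapsto\sum_{\b}x_{\b}\b$ and invert it abstractly using CK1/CK2, which makes the $B$-linearity of the homotopy and the role of the sinklessness hypothesis more transparent.
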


\begin{proof} Recall from $(\ref{psipsi})$ the right $B$-module morphisms
$\psi$ and $\psi_{\b}$ for $\b\in E^1$. For each $l\in \Z$, the set
$\{e_i^{\s}\z_{(p, q)}, \aa^{\s}\z_{(p, q)}\;|\;i\in E^0, (p, q)\in \Bb^l_i \;\text{and}\; \aa\in E^1 \;\text{with} \;t(\aa)=i\}$ is a $k$-basis of $\I^{l}$.
We define a $k$-linear map $h^l:\I^{l}\xra \I^{l-1}$ such that
\begin{equation*}
\begin{cases}
h^l(\aa^{\s}\z_{(p, q)})=0;& \\
h^l(e_i^{\s}\z_{(p, q)})=\sum_{\{\b\in E^1\;|\;s(\b)=i\}}\psi_{\b}(p^*q\b^*),&
\end{cases}
\end{equation*} for each $i\in E^{0}$, $l\in \Z$,
$(p, q)\in \mathbf{B}^{l}_{i}$, and $\aa\in E^1$ with $t(\aa)=i$. Here,
$p^*q\b^*$ is the multiplication of $p^*q$ and $\b^*$ in $L_k(E)$.
For any element $b\in L_k(E)^le_i$, we have $$h^l(\psi(b))=\sum_{\{\b\in E^1\;|\; s(\b)=i\}}\psi_{\b}(b\b^*).$$ Recall that $\psi$ and $\psi_{\b}$ for $\b\in E^1$ are right $B$-module morphisms. Then we have that $h=(h^l)_{l\in\Z}$ is right graded $B$-module morphisms of degree $-1$. In other words, we have $h\in {\rm End}_{B^{\rm opp}}(\I^{\bu})^{-1}$.

It suffices to prove that $\p^{l-1}\c h^l+h^{l+1}\c \p^{l}={\rm Id}_{\I^{l}}$ for each $l\in\Z$. For each $i\in E^{0}$, $l\in \Z$ and $(p, q)\in \mathbf{B}^{l}_{i}$, we have that
\begin{equation*}
\begin{split}
&(\p^{l-1}\c h^l+h^{l+1}\c \p^{l})(e_i^{\s}\z_{(p, q)})\\
&=\sum\nolimits_{\{\b\in E^1\;|\;s(\b)=i\}}(\p^{l-1}\c\psi_{\b})(p^*q\b^*)\\
&=\sum\nolimits_{\{\b\in E^1\;|\;s(\b)=i\}}\psi(p^*q\b^*\b)\\
&=e_i^{\s}\z_{(p, q)},
\end{split}
\end{equation*} where the second equality uses \cite[Lemma 3.7]{li}
and the last equality uses the (CK2)-relation for the Leavitt path algebra.
Similarly, we have
\begin{equation*}
\begin{split}
&(\p^{l-1}\c h^l+h^{l+1}\c \p^{l})(\aa^{\s}\z_{(p, q)})\\
&=h^{l+1}((\p^{l}\c\psi_{\aa})(p^*q))\\
&=h^{l+1}(\psi(p^*q\aa))\\
&=\sum\nolimits_{\{\b\in E^1\;|\;s(\b)=s(\aa)\}}\psi_{\b}(p^*q\aa\b^*)\\
&=\aa^{\s}\z_{(p, q)}
\end{split}
\end{equation*} for $\aa\in E^1$ with $t(\aa)=i$. Here, the last equality uses the (CK1)-relation for the Leavitt path algebra.
\end{proof}

\subsection{The independence of the injective Leavitt complex}
We show that the definition of the injective Leavitt complex of $E$ is
independent of the choice of special edges in $E$.

Denote by $S$ and $S'$ two different sets of special edges of $E$. For each $i\in E^0$ and $l\in\Z$,
let $\Bb^l_i$ and $(\Bb^l_i)'$ be the corresponding sets of admissible pairs with
respect to $S$ and $S'$ respectively; see $(\ref{p})$.
Define a map $\Phi_{li}:\Bb^l_i\xra(\Bb^l_i)'$ such that for $(p, q)\in\Bb^l_i$
\begin{equation*}\Phi_{li}((p,q))=\begin{cases}
(\aa \widehat{p}, \aa \widehat{q}) , & \text{if $p=\aa' \widehat{p}$, $q=\aa' \widehat{q}$ and $\aa'\in S'$};\\
(p, q),& \text{otherwise},
\end{cases}
\end{equation*} where $\aa\in S$ with $s(\aa)=s(\aa')$.
The map $\Phi_{li}:\Bb^l_i\xra ({\Bb^l_i})'$ is a bijection.
In fact, the inverse map of $\Phi_{li}$ can be defined symmetrically.

Denote by $\I^{\bullet}=(\I^{l}, \partial^{l})_{l\in \Z}$ and ${\I^{\bullet}}'=({\I^{l}}', (\partial^{l})')_{l\in \Z}$ the injective Leavitt complexes of $E$ with $S$ and $S'$
sets of special edges, respectively.
Let $\La'$ be the $k$-basis of $L_k(E)$ given by \cite[Theorem 1]{aajz} and \cite[Corollary 17]{hp} with $S'$ the set of special edges.

Recall that $B=L_k(E)^{\rm op}$. The maps $\psi': B\xra {\I^{\bullet}}', p^*q \mapsto e_{s(q)}^{\s}\z_{(p, q)}$ and
$\psi'_{\b}: B\xra {\I^{\bullet}}', p^*q \mapsto \delta_{s(q), t(\b)}\b^{\s}\z_{(p, q)}$
for $p^*q\in\La'$ and $\beta\in E^{1}$ are graded right $B$-module morphisms.

For each $l\in\Z$, define a $k$-linear map $\omega^l: \I^{l}\xra(\I^{l})'$ such that
$\omega^l(e_i^{\s}\zeta_{(p, q)})=\psi'(p^*q)$ and $\omega^l(\aa^{\s}\zeta_{(p, q)})=\psi'_{\aa}(p^*q)$ for $i\in E^{0}$ and
$(p, q)\in \mathbf{B}^{l}_{i}$ and $\aa\in E^1$
with $t(\aa)=i$.
Let $\omega^{\bullet}=(\omega^l)_{l\in \Z}:\I^{\bullet}\xra{\I^{\bullet}}'$.
By definitions we have $\omega^{\bullet}\circ\psi=\psi'$ and $\omega^{\bullet}\circ\psi_{\b}=\psi_{\b}'$ for each edge $\b\in E^1$.
Then we have $\omega^{\bullet}\circ\Psi=\Psi'$ with $\Psi'=\begin{pmatrix}
\psi'&(\psi'_{\b})_{{\b\in E^1}}
\end{pmatrix}:B\oplus B^{(E^1)}\xra {\I^{\bullet}}'$ a morphism of graded right $B$-modules.

\begin{prop} \label{unique} $(1)$ The above map $\omega^{\bullet}:\I^{\bullet} \xra{\I^{\bullet}}'$ is an isomorphism of complexes of $A$-modules.

$(2)$ The above map $\omega^{\bullet}:\I^{\bullet} \xra {\I^{\bullet}}'$ is an isomorphism of right dg $B$-modules.
\end{prop}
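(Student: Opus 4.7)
The plan is to exploit the factorization $\omega^{\bu}\c\Psi=\Psi'$, which holds on $B$ and on each copy $B\z_{\b}$ of $B^{(E^1)}$ by the very definition of $\omega^{\bu}$ together with~(\ref{psipsi}); this reduces almost every verification to basic properties of $\psi,\psi_{\b}$ (and their primed analogues), plus one identity in the Leavitt path algebra.

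First I verify that $\omega^{\bu}$ is a chain map by checking $\omega^{l+1}\c\p^l=(\p^l)'\c\omega^l$ on the basis $\{e_i^{\s}\z_{(p,q)}\}\cup\{\aa^{\s}\z_{(p,q)}\}$ of $\I^l$. On $e_i^{\s}\z_{(p,q)}=\psi(p^*q)$, both sides vanish, because $\p\c\psi=0$ (since $\psi(1)\in\ker\p^0$ and $B$ carries the trivial differential), and similarly $(\p^l)'\c\psi'=0$. On $\aa^{\s}\z_{(p,q)}=\psi_{\aa}(p^*q)$ I invoke the identity $\p^l\c\psi_{\aa}(b)=\psi(b\aa)$ from \cite[Lemma 3.7]{li}, together with its primed analogue. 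In the ``otherwise'' branch of the definition of $\p^l$, both sides collapse to $\psi'(p^*q\aa)$ at once. The main algebraic content sits in the ``special'' branch ($q=e_i$, $p=\aa\widehat{p}$ with $\aa\in S$), where
\[
\omega^{l+1}\c\p^l(\aa^{\s}\z_{(p,q)})=\psi'(\widehat{p}^*)-\sum_{\b\in S(\aa)}\psi'(\widehat{p}^*\b^*\b).
\]
Applying the (CK2)-relation $\sum_{\b\in E^1,\,s(\b)=s(\aa)}\b^*\b=s(\aa)$ in the form $\sum_{\b\in S(\aa)}\b^*\b=s(\aa)-\aa^*\aa$, together with $\widehat{p}^*\cdot s(\aa)=\widehat{p}^*$, this collapses to $\psi'(\widehat{p}^*\aa^*\aa)=\psi'(p^*\aa)$, which equals $(\p^l)'\c\omega^l(\aa^{\s}\z_{(p,q)})$ by the primed version of \cite[Lemma 3.7]{li}. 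This (CK2)-step is where I expect the main obstacle of the proof.

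For the $A$-linearity of each $\omega^l$ (needed for part~(1)), I check the generators $e_j\in E^0$ and $\gamma\in E^1$ on basis vectors, using that ${\I^{\bu}}'$ is a dg $A$-$B$-bimodule, so left $A$- and right $B$-actions commute. For instance,
\[
e_j\c\psi'(p^*q)=\bigl(e_j\c\psi'(1)\bigr)\cdot(p^*q)=e_j^{\s}\z'_{(e_j,e_j)}\cdot(p^*q)=\d_{j,s(q)}\psi'(p^*q),
\]
which matches $\omega^l(e_j\c e_i^{\s}\z_{(p,q)})$; the remaining cases ($e_j\c\aa^{\s}$, $\gamma\c e_i^{\s}$, $\gamma\c\aa^{\s}$) are analogous and reduce to the formulas $\gamma\c e_i^{\s}=0$ and $\gamma\c\aa^{\s}=\d_{\gamma,\aa}e_{t(\aa)}^{\s}$. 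The right $B$-linearity (needed for part~(2)) is then immediate from $\omega^{\bu}\c\Psi=\Psi'$: by Lemma~\ref{property} every element of $\I^l$ is of the form $\Psi(y)$, so $\omega^l(\Psi(y)\cdot b)=\omega^l(\Psi(yb))=\Psi'(yb)=\Psi'(y)\cdot b=\omega^l(\Psi(y))\cdot b$.

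Bijectivity follows by a symmetry argument: let $\nu^{\bu}:{\I^{\bu}}'\xra \I^{\bu}$ be constructed by interchanging throughout the roles of $S$ and $S'$, and of $(\psi,\psi_{\b})$ with $(\psi',\psi'_{\b})$. A direct check on basis vectors, again using $\omega^{\bu}\c\psi=\psi'$ and its primed counterpart, yields $\omega^{\bu}\c\nu^{\bu}=\mathrm{Id}$ and $\nu^{\bu}\c\omega^{\bu}=\mathrm{Id}$. Combining these four steps gives the isomorphism of complexes of $A$-modules of~(1) and of right dg $B$-modules of~(2).
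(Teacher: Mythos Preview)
Your proof is correct and follows essentially the same strategy as the paper. Two minor remarks: (i) your (CK2) case-analysis in the chain-map check is an unnecessary detour, since the identity $\omega^{\bullet}\circ\psi=\psi'$ (which you already have as part of $\omega^{\bullet}\circ\Psi=\Psi'$) together with \cite[Lemma~3.7]{li} gives $\omega^{l+1}\circ\partial^l(\alpha^{\sharp}\zeta_{(p,q)})=\omega^{l+1}(\psi(p^*q\alpha))=\psi'(p^*q\alpha)$ uniformly in both branches---this is exactly how the paper argues; (ii) for $B$-linearity the paper invokes the projectivity in Lemma~\ref{property} to choose a splitting $\Xi$ with $\Psi\circ\Xi=\mathrm{Id}_{\mathcal{I}^{\bullet}}$ and writes $\omega^{\bullet}=\Psi'\circ\Xi$ as a composite of $B$-module maps, whereas you use surjectivity of $\Psi$ directly---the two arguments are equivalent.
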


\begin{proof} $(1)$  We can directly check that $\omega^l$ is an $A$-module map for each $l\in\Z$.
The inverse map of $\omega^l$ can be defined symmetrically. We have that $\omega^l$ is an isomorphism of $A$-modules for each $l\in\Z$.
It remains to prove that $\omega^{\bullet}$ is a chain map of complexes.
For each $i\in E^{0}$, $l\in \Z$ and
$(p, q)\in \mathbf{B}^{l}_{i}$, since $((\p^l)'\circ\omega^i)(e_i^{\s}\z_{(p, q)})=0=(\omega^{l+1}\circ\p^l)(e_i^{\s}\z_{(p, q)})$,
it suffices to prove that $((\p^l)'\circ\omega^l)(\aa^{\s}\z_{(p, q)})=(\omega^{l+1}\circ\p^l)(\aa^{\s}\z_{(p, q)})$
for $\aa\in E^1$ with $t(\aa)=i$. By \cite[Lemma 3.7]{li}, we have that
$((\p^l)'\circ\omega^l)(\aa^{\s}\z_{(p, q)})=(\p^l)'(\psi'_{\aa}(p^*q))=\psi'(p^*q\aa)$
and $(\omega^{l+1}\circ\p^l)(\aa^{\s}\z_{(p, q)})=\omega^{l+1}(\psi(p^*q\aa))=\psi'(p^*q\aa)$.
Then we are done.

$(2)$ It remains to prove that $\omega^{\bullet}$ is a graded right $B$-module morphism.
By Lemma \ref{property}, there exists a graded right $B$-module morphism $\Xi: \I^{\bullet}\longrightarrow B\oplus B^{(E^1)}$
such that $\Psi\circ\Xi=\rm{Id}_{\I^{\bullet}}$. Since $\omega^{\bullet}\circ \Psi=\Psi'$, we have that
$\omega^{\bullet}=\omega^{\bullet}\circ (\Psi\circ\Xi)=\Psi'\circ\Xi$ is a composition of graded right $B$-module morphisms.
\end{proof}

\section{The projective Leavitt complex of a finite graph without sources}

In this section, we recall the construction of the projective Leavitt complex of a finite graph without sources and prove that the projective Leavitt complex is not left quasi-balanced.

\subsection{The projective Leavitt complex}

Let $E$ be a finite graph without sources. For a vertex $i\in E^0$, fix an edge $\gamma$
with $t(\gamma)=i$. We call the fixed edge the \emph{associated edge} terminating at $i$.
For an associated edge $\alpha$,
we set \begin{equation} \label{eq:vv}
T(\alpha)=\{\beta\in E^{1}\;| \;t(\beta)=t(\alpha), \beta\neq\alpha\}.
\end{equation}

\begin{defi}
For two paths $p=\aa_{m}\cdots\aa_{2}\aa_{1}$ and $q=\b_{n}\cdots \b_{2}\b_{1}$ with $m,n\geq 1$, we call the pair $(p, q)$ an
\emph{associated pair} in $E$
if $s(p)=s(q)$, and either $\alpha_{1}\neq\beta_{1}$, or  $\alpha_{1}=\beta_{1}$ is not associated.  In addition, we call $(p, e_{s(p)})$ and $(e_{s(p)}, p)$ \emph{associated pairs} in $E$ for each path $p$ in $E$. \hfill $\square$
\end{defi}

For each vertex $i\in Q_{0}$ and $l\in \Z$, set
\begin{equation}
\label{p}
\mathbf{\Lambda}^{l}_{i}=\{(p, q)\;|\;(p, q) \text{~is an associated pair with~}l(q)-l(p)=l \text{~and~} t(p)=i\}.
\end{equation} The set $\mathbf{\Lambda}^{l}_{i}$ is not empty for each vertex $i$ and each integer $l$; see  \cite[Lemma 2.2]{li2}.

Recall that $A=kE/kE^{\geq 2}$ is a finite dimensional algebra with radical square zero. Denote by $P_{i}=Ae_{i}$ the indecomposable projective left $A$-module for $i\in E^{0}$.

\begin{defi} \label{defproj} Let $E$ be a finite quiver without sources. The \emph{projective Leavitt complex} $\mathcal{P}^{\bullet}=(\mathcal{P}^{l}, \delta^{l})_{l\in\Z}$ of $E$ is defined as follows:

\noindent $(1)$ the $l$-th component $\mathcal{P}^{l}=\bigoplus_{i\in E^{0}}{P_{i}}^{(\mathbf{\La}^{l}_{i})}$.

\noindent $(2)$ the differential $\delta^{l}: \mathcal{P}^{l}\longrightarrow \mathcal{P}^{l+1}$ is given by  $\delta^{l}(\alpha\zeta_{(p, q)})=0$ and \begin{equation*}\delta^{l}(e_{i}\zeta_{(p, q)})=\begin{cases}
 \beta\zeta_{(\widehat{p},q)}, & \text{if $p=\beta\widehat{p}$};\\
\sum_{\{\beta\in E^{1}\;|\;  t(\beta)=i\}}\beta\zeta_{(e_{s(\beta)}, q\beta)},& \text{if $l(p)=0$,}
\end{cases}
\end{equation*}
for any $i\in E^{0}$, $(p, q)\in \mathbf{\La}^{l}_{i}$
and $\alpha\in E^{1}$ with $s(\aa)=i$. \hfill $\square$
\end{defi}

Each component $\mathcal{P}^{l}$ is a projective $A$-module and the differential $\d^{l}$ are $A$-module morphisms. $\mathcal{P}^{\bu}$ is an acyclic complex of injective $A$-modules; see \cite[Proposition 2.7]{li2}.

\begin{exm} \label{exm} Let $E$ be the following graph with one vertex and $n$ loops with $n\geq 2$.  $$\xymatrix{
\!\!\!&  \bullet \ar@{.}@(l,d) \ar@(ur,dr)^{\alpha_{1}} \ar@(r,d)^{\alpha_{2}} \ar@(dr,dl)^{\alpha_{3}} 
\ar@(l,u)^{\alpha_{n}}& 
}$$ We choose $\alpha_{1}$ to be the associated edge.
Let $e$ be the trivial path corresponding the unique vertex. Set $\mathbf{\Lambda}^{l}$ to be the set of associated pairs $(p, q)$ in $E$ with $l(q)-l(p)=l$ for each $l\in \mathbb{Z}$.
A pair $(p,q)$ of paths lies in $\mathbf{\Lambda}^{l}$ if and only if
$l(q)-l(p)=l$ and $p, q$ do not begin with $\aa_1$ simultaneously.
In particular, the set $\mathbf{\Lambda}^{l}$ is infinite.

The corresponding algebra $A$ with radical square zero
has a $k$-basis $\{e, \aa_1,\cdots, \aa_n\}$.
Set $P=A$.
Then the projective Leavitt complex $\mathcal{P}^{\bullet}$ of $E$ is as follows.
$$
\cdots
\stackrel{}{\longrightarrow} P^{({\mathbf{\Lambda}^{0}})}
\stackrel{\d^{0}}{\longrightarrow}P^{({\mathbf{\Lambda}^{1}})}
\stackrel{\d^{1}}{\longrightarrow} P^{({\mathbf{\Lambda}^{2}})}
\stackrel{}{\longrightarrow} \cdots
$$ We write the differential $\delta^{1}$ explicitly: $\d^{1}(e\zeta_{(p, q)})=\begin{cases}
 \beta\zeta_{(\widehat{p}, q)},  &\text{if $p=\beta \widehat{p}$};\\
 \sum_{i=1}^n\aa_i\zeta_{(e, q\alpha_{i})},& \text{if $l(p)=0$},
\end{cases}
$, $\d^{1}(\alpha_{j} \zeta_{(p, q)})=0$ for $1\leq j\leq n$ and $(p, q)\in {\mathbf{\Lambda}}^{1}$.
\end{exm}

For notation, $E^{\rm op}$
is the opposite graph of $E$. For a path $p$ in $E$, denote by
$p^{\rm op}$ the corresponding path in $E^{\rm op}$. The starting and terminating vertices of $p^{\rm op}$
are $t(p)$ and $s(p)$, respectively.
For convention, $e_{j}^{\rm op}=e_{j}$ for each vertex $j\in E^{0}$.

In what follows, we write $B=L_k(E^{\rm op})$, which is a $\Z$-graded algebra. We view $B$ as a dg algebra with trivial differential.

Consider $A=kE/kE^{\geq 2}$ as a dg algebra concentrated on degree zero. The projective Leavitt complex $\mathcal{P}^{\bu}=\bigoplus_{l\in\Z}\mathcal{P}^l$ is a left dg $A$-module.
By \cite[Proposition 4.6]{li2}, $\mathcal{P}^{\bu}$ is a right dg $B$-module and a dg $A$-$B$-bimodule. 

\vskip 5pt

The following theorem establishes a connection between the projective Leavitt complex and the Leavitt path algebra, which justifies the terminology.

\noindent $\mathbf{Theorem ~~II}$ \emph{Let $E$ be a finite graph without sources, and let $A=kE/kE^{\geq 2}$ be the corresponding algebra with radical square zero.}

\begin{enumerate}
\item[(1)] \emph{The projective Leavitt complex $\mathcal{P}^{\bullet}$ of $E$ is a compact generator for the homotopy category $\K_{\rm ac}(A\-\Proj)$.}

\item[(2)] \emph{The dg $A$-$B$-bimodule
$\mathcal{P}^{\bullet}$ is right quasi-balanced. In particular, the opposite dg endomorphism algebra of the projective Leavitt complex of $E$ is quasi-isomorphic to the Leavitt path algebra $L_k(E^{\rm op})$.  Here, $E^{\rm op}$ is the opposite graph of $E$; $L_k(E^{\rm op})$ is naturally $\Z$-graded and viewed as a dg algebra with trivial differential.}
\end{enumerate} For the proof of Theorem II, refer to \cite[Theorem 3.7]{li2} and \cite[Theorem 5.2]{li2}.

\vskip 5pt

There is a unique right $B$-module morphism $\phi: B\xra \mathcal{P}^{\bullet}$ with $$\phi(1)=\sum_{i\in E^0}e_i\z_{(e_i, e_i)}.$$ Here, $1$ is the unit of $B$. For each edge $\beta\in E^{1}$, there is a unique right $B$-module morphism $\phi_{\b}: B\xra \mathcal{P}^{\bullet}$ with $\phi_{\b}(1)=\b\z_{(e_{s(\b)}, e_{s(\b)})}$. Let $(p, q)$ be an associated pair in $E$. We have $\sum_{i\in E^0}e_i\zeta_{(e_{i}, e_i)}\cdot (p^{\rm op})^*q^{\rm op}=e_{t(p)}\zeta_{(p,  q)};$ and $\b\zeta_{(e_{s(\b)}, e_{s(\b)})}\cdot (p^{\rm op})^*q^{\rm op}=\d_{s(\b), t(p)}\b\zeta_{(p, q)}$ for each edge $\b\in E^1$; see \cite[Lemma 4.5]{li2}. It follows that \begin{equation}\label{phi}\phi((p^{\rm op})^*q^{\rm op})=e_{t(p)}\z_{(p, q)}
\text{\qquad and\qquad} \phi_{\b}((p^{\rm op})^*q^{\rm op})=\delta_{t(p), s(\b)}\b\z_{(p, q)}.\end{equation} It follows that $\phi$ is injective and $\Im \phi_{\b}\cong e_{s(\b)}B$ for each $\b\in E^1$. Consider the gradings of $B$ and $\mathcal{P}^{
\bu}$. We have that $\phi$ and $\phi_{\b}$ for $\b\in E^1$ are right graded $B$-module morphisms.

Set $\Phi=\begin{pmatrix}
\phi&(\phi_{\b})_{{\b\in E^1}}
\end{pmatrix}: B\oplus B^{(E^1)}\xra \mathcal{P}^{\bullet}$. The map $\Phi$ is a graded right $B$-module morphism.

\begin{lem} \label{pproj}
The above morphism $\Phi$ is surjective and the projective Leavitt complex $\mathcal{P}^{\bullet}$ of $E$ is a graded projective right $B$-module.
\end{lem}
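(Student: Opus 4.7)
The plan is to mirror the argument of Lemma \ref{property}, the analogous statement for the injective Leavitt complex, using the formulas in $(\ref{phi})$ in place of those in $(\ref{psipsi})$. The key structural observation is that for each $i\in E^{0}$ and $l\in\Z$ the summand $P_i^{(\Lambda^l_i)}$ of $\mathcal{P}^l$ has the $k$-basis consisting of the vectors $e_i\zeta_{(p,q)}$ and $\alpha\zeta_{(p,q)}$ for $(p,q)\in\Lambda^l_i$ and $\alpha\in E^1$ with $s(\alpha)=i$; this reflects the $k$-basis $\{e_i\}\cup\{\alpha\mid s(\alpha)=i\}$ of $P_i=Ae_i$.

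For surjectivity, I would appeal directly to $(\ref{phi})$: we have $e_i\zeta_{(p,q)}=\phi((p^{\rm op})^*q^{\rm op})$ whenever $t(p)=i$, and $\alpha\zeta_{(p,q)}=\phi_\alpha((p^{\rm op})^*q^{\rm op})$ whenever $\alpha\in E^1$ satisfies $s(\alpha)=i=t(p)$. Since these generators exhaust the basis of $\mathcal{P}^{\bullet}$, we get $\Im\Phi=\mathcal{P}^{\bullet}$. For the projectivity assertion I would verify that the sum decomposes as $\Im\Phi=\Im\phi\oplus\bigl(\bigoplus_{\beta\in E^1}\Im\phi_\beta\bigr)$; combined with the already-recorded facts that $\phi$ is injective (so $\Im\phi\cong B$) and $\Im\phi_\beta\cong e_{s(\beta)}B$, this will yield $\mathcal{P}^{\bullet}\cong B\oplus\bigl(\bigoplus_{\beta\in E^1} e_{s(\beta)}B\bigr)$ as graded right $B$-modules, which is visibly graded projective.

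The only step requiring care is the directness of the decomposition, which I regard as the main (though shallow) obstacle. Reading off the basis description above, $\Im\phi$ is spanned by vectors of the form $e_{t(p)}\zeta_{(p,q)}$, while each $\Im\phi_\beta$ is spanned by vectors of the form $\beta\zeta_{(p,q)}$ for that specific edge $\beta$, so the various summands hit linearly disjoint portions of a fixed basis of $\mathcal{P}^{\bullet}$. This mirrors the corresponding verification in Lemma \ref{property}, and I expect it to be the only point in the proof where a brief explicit computation is warranted.
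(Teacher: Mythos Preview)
Your proposal is correct and follows essentially the same route as the paper's own proof: use the formulas in $(\ref{phi})$ to hit every basis vector of $\mathcal{P}^{\bullet}$, then observe that $\Im\Phi=\Im\phi\oplus\bigl(\bigoplus_{\beta\in E^1}\Im\phi_\beta\bigr)\cong B\oplus\bigl(\bigoplus_{\beta\in E^1} e_{s(\beta)}B\bigr)$. The paper states the directness of this decomposition without further comment, whereas you spell out the reason (the summands hit disjoint pieces of a fixed $k$-basis), but otherwise the arguments coincide.
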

\begin{proof}
For each $i\in E^0$, $l\in \Z$ and $(p, q)\in \mathbf{\Lambda}^l_i$, we have that
$e_i\z_{(p, q)}=\phi((p^{\rm op})^*q^{\rm op})$ and $\aa\z_{(p, q)}=\phi_{\aa}((p^{\rm op})^*q^{\rm op})$
for each edge $\aa\in E^1$ with $s(\aa)=i$. Then $\Phi$ is surjective and $\mathcal{P}^{\bullet}=\Im \Phi$.
Observe that $\Im \Phi=\Im\phi+\sum_{\b\in E^1}\Im \phi_{\b}=
\Im\phi\oplus(\oplus_{\b\in E^1}\Im \phi_{\b})$. Then $\Im \Phi\cong B\oplus(\oplus_{\b\in E^1}e_{s(\b)}B)$ and we are done.
\end{proof}

We prove that the dg $A$-$B$-bimodule $\mathcal{P}^{\bu}$ is not left quasi-balanced. In other words, the canonical dg algebra homomorphism $A \xra {\rm End}_{B^{\rm opp}}(\mathcal{P}^{\bullet})$ is not a quasi-isomorphism. Indeed, we infer from the observation that the dg endomorphism algebra ${\rm End}_{B^{\rm opp}}(\mathcal{P}^{\bullet})$ is acyclic.

\begin{prop}\label{leftquasi} We have that $\mathcal{P}^{\bu}=0$ in the homotopy category of right dg $B$-modules.
\end{prop}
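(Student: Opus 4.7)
The plan is to produce an explicit contracting homotopy, mimicking the proof given above for the injective Leavitt complex. Define $k$-linear maps $h^l\colon \mathcal{P}^l \to \mathcal{P}^{l-1}$ by
$$h^l(e_i\z_{(p,q)})=0, \qquad h^l(\aa\z_{(p,q)})=\phi\bigl((\aa^{\rm op})^*(p^{\rm op})^*q^{\rm op}\bigr),$$
for $i\in E^0$, $(p,q)\in\mathbf{\La}^l_i$ and $\aa\in E^1$ with $s(\aa)=i$. Note that the vanishing is now on the $e_i$-summand rather than on the $\aa$-summand, reflecting the fact that in the projective case it is $\aa\z_{(p,q)}$ (not $e_i\z_{(p,q)}$) that lies in $\Ke\d^l$; see Definition~\ref{defproj}. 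Since $\phi$ and all $\phi_\b$ are right graded $B$-module morphisms and $\mathcal{P}^\bu=\Im\phi\oplus \bigoplus_{\b\in E^1}\Im\phi_\b$ as graded right $B$-modules by Lemma~\ref{pproj}, the collection $h=(h^l)_{l\in\Z}$ is automatically a right graded $B$-module morphism of degree $-1$; equivalently, $h\in\End_{B^{\rm opp}}(\mathcal{P}^\bu)^{-1}$.

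To verify the homotopy identity $\d^{l-1}\c h^l+h^{l+1}\c \d^l={\rm Id}_{\mathcal{P}^l}$, I would first record two convenient identities: $\d\c\phi_\aa=0$, which is immediate from $\phi_\aa(1)=\aa\z_{(e_{s(\aa)},e_{s(\aa)})}\in\Ke\d$; and the projective analogue of \cite[Lemma~3.7]{li}, namely $\d\c\phi=\sum_{\b\in E^1}\phi_\b\c L_{\b^{\rm op}}$, where $L_{\b^{\rm op}}$ denotes left multiplication by $\b^{\rm op}\in B$. The second identity follows by evaluating at $1\in B$: Definition~\ref{defproj} gives $\d(\phi(1))=\sum_{\b\in E^1}\b\z_{(e_{s(\b)},\b)}=\sum_{\b\in E^1}\phi_\b(\b^{\rm op})$, and the general case uses right $B$-linearity of $\d$ and $\phi_\b$.

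Given these, the verification is routine. On $\aa\z_{(p,q)}$ one has $h\d=0$ by the first identity, while the second yields
$$\d h(\aa\z_{(p,q)})=\sum_{\b\in E^1}\phi_\b\bigl(\b^{\rm op}(\aa^{\rm op})^*(p^{\rm op})^*q^{\rm op}\bigr),$$
which the (CK1)-relation in $L_k(E^{\rm op})$ collapses to the single term $\phi_\aa(s(\aa)\cdot(p^{\rm op})^*q^{\rm op})=\phi_\aa((p^{\rm op})^*q^{\rm op})=\aa\z_{(p,q)}$ by $(\ref{phi})$. On $e_i\z_{(p,q)}$ one has $\d h=0$, and $h\d$ splits into the two sub-cases from Definition~\ref{defproj}: when $p=\b\widehat p$, the identity $(\widehat p^{\rm op}\b^{\rm op})^*=(p^{\rm op})^*$ immediately gives $h(\b\z_{(\widehat p,q)})=\phi((p^{\rm op})^*q^{\rm op})=e_i\z_{(p,q)}$; when $l(p)=0$, summing $h(\b\z_{(e_{s(\b)},q\b)})=\phi((\b^{\rm op})^*\b^{\rm op}q^{\rm op})$ over $\{\b\in E^1\,|\,t(\b)=i\}$ and invoking the (CK2)-relation in $L_k(E^{\rm op})$ at the vertex $i$ produces $\phi(e_i\cdot q^{\rm op})=e_i\z_{(e_i,q)}$. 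The main (and essentially only) subtlety is this last step: (CK2) is available at $i$ precisely because $E$ has no sources, equivalently $i$ is not a sink in $E^{\rm op}$. The rest of the argument is bookkeeping with the $^{\rm op}$ correspondence between paths in $E$ and those in $E^{\rm op}$.
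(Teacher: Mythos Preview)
Your proof is correct and takes essentially the same approach as the paper: the contracting homotopy you write in closed form as $h(\aa\zeta_{(p,q)})=\phi\bigl((\aa^{\rm op})^*(p^{\rm op})^*q^{\rm op}\bigr)$ is exactly the map the paper defines piecewise and then rewrites (see the identity $h^l(\phi_{\aa}(b))=\phi((\aa^{\rm op})^*b)$ there), and your identity $\d\circ\phi=\sum_{\b}\phi_\b\circ L_{\b^{\rm op}}$ is precisely \cite[Lemma~4.7]{li2}, which the paper also invokes. The only cosmetic difference is that the paper handles the $e_i\zeta_{(p,q)}$-case by a direct cancellation from the explicit piecewise formula for $h$, whereas you recognise that cancellation as the (CK2)-relation in $L_k(E^{\rm op})$; these are the same computation.
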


\begin{proof} Recall from $(\ref{phi})$ the right $B$-module morphisms
$\phi$ and $\phi_{\b}$ for $\b\in E^1$. For each $l\in \Z$, the set
$\{e_i\z_{(p, q)}, \aa\z_{(p, q)}\;|\;i\in E^0, (p, q)\in \mathbf{\Lambda}^l_i \;\text{and}\; \aa\in E^1 \;\text{with} \; s(\aa)=i\}$ is a $k$-basis of $\mathcal{P}^{l}$.
We define a $k$-linear map $h^l:\mathcal{P}^{l}\xra \mathcal{P}^{l-1}$ such that $h^l(e_i\z_{(p, q)})=0$ and 
\begin{equation*}
h^l(\aa\z_{(p, q)})=\begin{cases}
e_{t(\aa)}\zeta_{(e_{t(\aa)}, \widetilde{q})}-\sum_{\b\in T(\aa)}e_{t(\aa)}\zeta_{(\b, \widetilde{q}\b)};&  \begin{matrix}\text{if~} l(p)=0, ~~q=\widetilde{q}\aa \\\text{~and~} \alpha \text{~is associated};\end{matrix}\\
e_{t(\aa)}\zeta_{(\aa p, q)}, & \text{otherwise.}
\end{cases}
\end{equation*} for each $i\in E^{0}$, $l\in \Z$,
$(p, q)\in \mathbf{\Lambda}^{l}_{i}$, and $\aa\in E^1$ with $s(\aa)=i$. It follows that $h^l(\phi_{\aa}((p^{\rm op})^*q^{\rm op}))=\phi((p^{\rm op})^*q^{\rm op}).$ For any element $b\in e_iL_k(E)^l$, we have \begin{equation}\label{uu}h^l(\phi_{\aa}(b)=\phi((\b^{\rm op})^*b).\end{equation} Here,
$(\b^{\rm op})^*b$ is the multiplication of $b$ and $(\b^{\rm op})^*$ in $B=L_k(E^{\rm op})$. Recall that $\phi$ and $\phi_{\b}$ for $\b\in E^1$ are right $B$-module morphisms. Then we have that $h=(h^l)_{l\in\Z}$ is right graded $B$-module morphisms of degree $-1$. In other words, we have $h\in {\rm End}_{B^{\rm opp}}(\mathcal{P}^{\bu})^{-1}$.

It suffices to prove that $\d^{l-1}\c h^l+h^{l+1}\c \d^{l}={\rm Id}_{\mathcal{P}^{l}}$ for each $l\in\Z$. For each $i\in E^{0}$, $l\in \Z$ and $(p, q)\in \mathbf{\Lambda}^{l}_{i}$, we have that
\begin{equation*}
\begin{split}
&(\d^{l-1}\c h^l+h^{l+1}\c \d^{l})(e_i\z_{(p, q)})\\
&=0+h^{l+1}(\d^{l}(e_i\z_{(p, q)}))\\
&=\begin{cases}
h^{l+1}(\b\zeta_{(p, q)});& \text{if $p=\b\widehat{p}$;} \\
h^{l+1}(\sum_{\{\g\in E^1 \;|\; t(\g)=i\}}\g\zeta_{(e_{s(\g)}, q\g)}), & \text{if $l(p)=0$.}
\end{cases}\\
&=e_{i}\zeta_{(p, q)}\\
\end{split}
\end{equation*} where the last equality uses the definition of $h^{l+1}$. Similarly, we have
\begin{equation*}
\begin{split}
&(\d^{l-1}\c h^l+h^{l+1}\c \d^{l})(\aa\z_{(p, q)})\\
&=(\d^{l-1}\c h^l)(\aa\z_{(p, q)})\\
&=\d^{l-1}\big((h^{l}\c\phi_{\aa})((p^{\rm op})^*q^{\rm op})\big)\\
&=\d^{l-1}(\phi((\aa^{\rm op})^*(p^{\rm op})^*q^{\rm op}))\\
&=\sum_{\b\in E^1\;|\; t(\b)=t(\aa)}\phi_{\b}(\b^{\rm op}(\aa^{\rm op})^*(p^{\rm op})^*q^{\rm op})\\
&=\aa\z_{(p, q)}
\end{split}
\end{equation*} for $\aa\in E^1$ with $s(\aa)=i$. Here, the third equality uses \eqref{uu}, the fourth equality uses \cite[Lemma 4.7]{li2}, and the second last equality uses the (CK1)-relation for the Leavitt path algebra $L_k(E^{\rm op})$.
\end{proof}

\subsection{The independence of the projective Leavitt complex}
We show that the definition of the projective Leavitt complex of $E$ is
independent of the choice of associated edges in $E$.

Denote by $H$ and $H'$ two different sets of associated edges of $E$. For each $i\in E^0$ and $l\in\Z$,
let $\mathbf{\Lambda}^l_i$ and ${\mathbf{\Lambda}^l_i}'$ be the corresponding sets of associated pairs with
respect to $H$ and $H'$ respectively; see $(\ref{p})$.
Define a map $\tau_{li}:\mathbf{\Lambda}^l_i\xra{\mathbf{\Lambda}^l_i}'$ such that for $(p, q)\in\mathbf{\Lambda}^l_i$
\begin{equation*}\tau_{li}((p,q))=\begin{cases}
(\widetilde{p}\aa, \widetilde{q}\aa) , & \text{if $p=\widetilde{p}\aa'$, $q=\widetilde{q}\aa'$ and $\aa'\in H'$};\\
(p, q),& \text{otherwise},
\end{cases}
\end{equation*} where $\aa\in H$ with $t(\aa)=t(\aa')$.
The map $\tau_{li}:\mathbf{\Lambda}^l_i\xra {\mathbf{\Lambda}^l_i}'$ is a bijection.
In fact, the inverse map of $\tau_{li}$ can be defined symmetrically.

Denote by $\mathcal{P}^{\bullet}=(\mathcal{P}^{l}, \d^{l})_{l\in \Z}$ and ${\mathcal{P}^{\bullet}}'=((\mathcal{P}^{l})', (\d^{l})')_{l\in \Z}$ the projective Leavitt complexes of $E$ with $H$ and $H'$
sets of associated edges, respectively.

Recall that $B=L_k(E^{\rm op})$. The maps $\phi': B\xra {\mathcal{P}^{\bullet}}', (p^{\rm op})^*q^{\rm op} \mapsto e_{t(p)}\z_{(p, q)}$ and
$\phi'_{\b}: B\xra {\mathcal{P}^{\bullet}}', (p^{\rm op})^*q^{\rm op}  \mapsto \delta_{t(p), s(\b)}\b\z_{(p, q)}$
with $(p, q)$ an associated pair and $\beta\in E^{1}$ are graded right $B$-module morphisms.

For each $l\in\Z$, define a $k$-linear map $\theta^l: \mathcal{P}^{l}\xra(\mathcal{P}^{l})'$ such that
$\theta^l(e_i\zeta_{(p, q)})=\phi'((p^{\rm op})^*q^{\rm op})$ and $\theta^l(\aa\zeta_{(p, q)})=\phi'_{\aa}((p^{\rm op})^*q^{\rm op})$ for $i\in E^{0}$ and
$(p, q)\in \mathbf{\Lambda}^{l}_{i}$ and $\aa\in E^1$
with $s(\aa)=i$.
Let $\theta^{\bullet}=(\theta^l)_{l\in \Z}:\mathcal{P}^{\bullet}\xra{\mathcal{P}^{\bullet}}'$.
By definitions we have $\theta^{\bullet}\circ\phi=\phi'$ and $\theta^{\bullet}\circ\phi_{\b}=\phi_{\b}'$ for each edge $\b\in E^1$.
Then we have $\theta^{\bullet}\circ\Phi=\Phi'$ with $\Phi'=\begin{pmatrix}
\phi'&(\phi'_{\b})_{{\b\in E^1}}
\end{pmatrix}:B\oplus B^{(E^1)}\xra {\mathcal{P}^{\bullet}}'$ a morphism of graded right $B$-modules.

\begin{prop}  $(1)$ The above map $\theta^{\bullet}:\mathcal{P}^{\bullet} \xra{\mathcal{P}^{\bullet}}'$ is an isomorphism of complexes of $A$-modules.

$(2)$ The above map $\theta^{\bullet}:\mathcal{P}^{\bullet} \xra {\mathcal{P}^{\bullet}}'$ is an isomorphism of right dg $B$-modules.
\end{prop}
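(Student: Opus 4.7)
The plan is to mimic the proof of Proposition~\ref{unique} essentially verbatim, substituting the projective data $(\phi,\phi_{\b},\Phi)$ for the injective data $(\psi,\psi_{\b},\Psi)$ and replacing every monomial $p^{*}q\in L_{k}(E)$ by its companion $(p^{\rm op})^{*}q^{\rm op}\in B=L_{k}(E^{\rm op})$. The two parts then follow the template of Proposition~\ref{unique}(1)--(2).

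For part (1), I would first verify by direct inspection of the defining formulas that $\theta^{l}$ is an $A$-module map on each degree, and construct its two-sided inverse symmetrically by exchanging the roles of $H$ and $H'$ (using $\tau_{li}^{-1}$). Thus each $\theta^{l}$ is an $A$-module isomorphism, and only the chain-map property $(\d^{l})'\c\theta^{l}=\theta^{l+1}\c\d^{l}$ remains. I would check this on the $k$-basis $\{e_{i}\z_{(p,q)},\aa\z_{(p,q)}\}$ of $\mathcal{P}^{l}$. On $\aa\z_{(p,q)}$ both sides vanish, since by Definition~\ref{defproj} $\d^{l}(\aa\z_{(p,q)})=0$, and likewise $(\d^{l})'(\phi'_{\aa}((p^{\rm op})^{*}q^{\rm op}))=0$. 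On $e_{i}\z_{(p,q)}$ I would apply the identity $\d^{l}(\phi(b))=\sum_{\{\b\in E^{1}\,|\,t(\b)=i\}}\phi_{\b}(\b^{\rm op}b)$ for $b\in e_{i}B^{l}$, which is exactly the identity used in the computation of Proposition~\ref{leftquasi} via \cite[Lemma 4.7]{li2}, together with its primed analogue for $\phi',\phi'_{\b}$. Because this identity is intrinsic to the right $B$-module structure and makes no reference to the chosen set of associated edges, both $\theta^{l+1}\c\d^{l}(e_{i}\z_{(p,q)})$ and $(\d^{l})'\c\theta^{l}(e_{i}\z_{(p,q)})$ collapse to the same element $\sum_{\{\b\,|\,t(\b)=i\}}\phi'_{\b}(\b^{\rm op}(p^{\rm op})^{*}q^{\rm op})$ of ${\mathcal{P}^{\bullet}}'$.

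For part (2), since $B$ has trivial differential, combining part (1) with the right $B$-linearity of $\theta^{\bullet}$ suffices. By Lemma~\ref{pproj}, $\mathcal{P}^{\bullet}$ is a graded projective right $B$-module, so the graded right $B$-module epimorphism $\Phi:B\oplus B^{(E^{1})}\to\mathcal{P}^{\bullet}$ admits a section $\Xi$ with $\Phi\c\Xi={\rm Id}_{\mathcal{P}^{\bullet}}$. By construction of $\theta^{\bullet}$ we already have $\theta^{\bullet}\c\phi=\phi'$ and $\theta^{\bullet}\c\phi_{\b}=\phi'_{\b}$, equivalently $\theta^{\bullet}\c\Phi=\Phi'$, and hence $\theta^{\bullet}=(\theta^{\bullet}\c\Phi)\c\Xi=\Phi'\c\Xi$ is exhibited as a composition of graded right $B$-module morphisms.

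The main obstacle I expect is the sub-case $l(p)=0$ of the chain-map verification in part (1): there $\d^{l}(e_{i}\z_{(e_{i},q)})$ expands as a sum $\sum_{\b:t(\b)=i}\b\z_{(e_{s(\b)},q\b)}$, and matching this with the apparently single-term expression $(\d^{l})'(\phi'(q^{\rm op}))$ requires unpacking the CK1-relation in $L_{k}(E^{\rm op})$ together with the compatibilities $(q\b)^{\rm op}=\b^{\rm op}q^{\rm op}$ and $e_{s(\b)}\b^{\rm op}=\b^{\rm op}$, exactly as in the computation of Proposition~\ref{leftquasi}. Once both sides are routed through the intrinsic $B$-module formulas for $\phi,\phi_{\b}$ and their primed counterparts, the dependence on the choice of associated edges disappears and the equality becomes tautological.
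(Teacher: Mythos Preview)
Your proposal is correct and follows essentially the same approach as the paper's proof: verifying the $A$-linearity and bijectivity of each $\theta^{l}$ directly, checking the chain-map identity via \cite[Lemma~4.7]{li2}, and deducing right $B$-linearity from the factorisation $\theta^{\bullet}=\Phi'\circ\Omega$ through a graded section $\Omega$ of $\Phi$ supplied by Lemma~\ref{pproj}. Your idea of applying the identity $\d^{l}(\phi(b))=\sum_{\{\b:\,t(\b)=i\}}\phi_{\b}(\b^{\rm op}b)$ symmetrically on both sides is in fact slightly cleaner than the paper's explicit case split on $l(p)$ and dissolves your anticipated obstacle automatically, since no direct appeal to the CK-relations is needed once both sides are routed through \cite[Lemma~4.7]{li2} and the relations $\theta^{\bullet}\circ\phi=\phi'$, $\theta^{\bullet}\circ\phi_{\b}=\phi'_{\b}$.
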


\begin{proof} $(1)$  We can directly check that $\theta^l$ is an $A$-module map for each $l\in\Z$.
The inverse map of $\theta^l$ can be defined symmetrically. We have that $\theta^l$ is an isomorphism of $A$-modules for each $l\in\Z$.
It remains to prove that $\theta^{\bullet}$ is a chain map of complexes.
For each $i\in E^{0}$, $l\in \Z$ and
$(p, q)\in \mathbf{\Lambda}^{l}_{i}$, since $((\d^l)'\circ\theta^i)(\aa\z_{(p, q)})=0=(\theta^{l+1}\circ\d^l)(\aa\z_{(p, q)})$,
it suffices to prove that $((\d^l)'\circ\theta^l)(e_i\z_{(p, q)})=(\theta^{l+1}\circ\d^l)(e_i\z_{(p, q)})$
for $\aa\in E^1$ with $s(\aa)=i$. By \cite[Lemma 4.7]{li2}, we have that
$$((\d^l)'\circ\theta^l)(e_i\z_{(p, q)})=(\d^l)'(\phi'((p^{\rm op})^*q^{\rm op}))=\sum_{\{\b\in E^1\;|\; t(\b)=i\}}\phi'_{\b}(\aa^{\rm op}(p^{\rm op})^*q^{\rm op}))$$
and 
\begin{equation*}
\begin{split}
(\theta^{l+1}\circ\d^l)(e_i\z_{(p, q)})
&=\begin{cases}
\phi'_{\aa_1}(\aa_1^{\rm op}(p^{\rm op})^*q^{\rm op})), & \text{ if $p=\aa_1\widehat{p}$};\\
\sum_{\{\b\in E^1\;|\; t(\b)=i\}}\phi'_{\b}(\aa^{\rm op}(p^{\rm op})^*q^{\rm op})), & \text{ if $l(p)=0$.}
\end{cases}\\
&=\sum_{\{\b\in E^1\;|\; t(\b)=i\}}\phi'_{\b}(\aa^{\rm op}(p^{\rm op})^*q^{\rm op})).
\end{split}
\end{equation*} Then we are done.

$(2)$ It remains to prove that $\theta^{\bullet}$ is a graded right $B$-module morphism.
By Lemma \ref{pproj}, there exists a graded right $B$-module morphism $\Omega: \mathcal{P}^{\bullet}\longrightarrow B\oplus B^{(E^1)}$
such that $\Phi\circ\Omega=\rm{Id}_{\mathcal{P}^{\bullet}}$. Since $\theta^{\bullet}\circ \Phi=\Phi'$, we have that
$\theta^{\bullet}=\theta^{\bullet}\circ (\Phi\circ\Omega)=\Phi'\circ\Omega$ is a composition of graded right $B$-module morphisms.
\end{proof}

\section{Acknowledgements} The author thanks Professor Xiao-Wu Chen for many helpful discussions and encouragements. This project was supported by the National Natural Science Foundation of China (No.s 11522113 and 11571329). The author thanks Dr. Ambily Ambattu Asokan for hospitality when the author was in Cochin University of Science and Technology.  The author thanks Professor Roozbeh Hazrat for many helpful discussions and encouragements. The author also would like to acknowledge the support of Australian Research Council grant DP160101481. The author thanks the anonymous referees for their very helpful
suggestions to improve this paper.

\vskip 10pt

{\footnotesize\noindent Huanhuan ${\rm Li}^{1}$ \\
1. Centre for Research in Mathematics, Western Sydney University, Australia.\\
E-mail: h.li@westernsydney.edu.au}

\end{document}